\documentclass[a4paper,12pt]{article}
\setlength{\textwidth}{16cm}
\setlength{\textheight}{23cm}
\setlength{\oddsidemargin}{0mm}
\setlength{\topmargin}{-1cm}

\usepackage{latexsym}
\usepackage{amsmath}
\usepackage{amssymb}
\usepackage{enumerate}
\usepackage{color}
\definecolor{blue}{rgb}{0,0,1}
\definecolor{red}{rgb}{1,0,0}

\usepackage{theorem}
\newtheorem{theorem}{Theorem}[section]
\newtheorem{proposition}[theorem]{Proposition}
\newtheorem{lemma}[theorem]{Lemma}
\newtheorem{corollary}[theorem]{Corollary}

\theorembodyfont{\rmfamily}
\newtheorem{proof}{\textmd{\textit{Proof.}}}

\newtheorem{remark}[theorem]{Remark}
\newtheorem{example}[theorem]{Example}

\makeatletter

\@addtoreset{equation}{section}
\makeatother

\newcommand{\qedd}{\hfill \Box}

\newcommand{\N}{\ensuremath{\mathbb{N}}}
\newcommand{\R}{\ensuremath{\mathbb{R}}}

\title{Metric structures associated to Finsler metrics
\footnote{
Mathematics Subject Classification (2010)\,:\,53C60, 53C22.}
\footnote{
Keywords: Randers metrics, geodesics, distance.}
}
\author{Sorin V. SABAU, Kazuhiro SHIBUYA and Hideo SHIMADA}
\date{}
\pagestyle{plain}

\begin{document}


\maketitle

\begin{abstract}
We investigate the relation between weighted quasi-metric Spaces and Finsler Spaces. We show that the induced metric of a Randers space with reversible geodesics is a weighted quasi-metric space.

\end{abstract}

\section{Introduction and Motivation}

Riemannian spaces can be represented as metric spaces. Indeed, for a Riemannian space $(M,a)$ we can define the induced metric space $(M,d_\alpha)$, with the metric 
\begin{equation}
d_\alpha:M\times M\to [0,\infty),\quad 
d_\alpha(x,y):=\inf_{\gamma\in \Gamma_{xy}}\int_a^b\alpha(\gamma(t),\dot\gamma(t))dt,
\end{equation}
where $\Gamma_{xy}:=\{ \gamma:[a,b]\to M\ |\ \gamma \textrm{ (piecewise) C}^\infty \textrm{-curve},\gamma(a)=x,\gamma(b)=y\}$ is the set of curves joining points $x$ and $y$, $\dot\gamma(t):=\frac{d\gamma(t)}{dt}$ the tangent vector to $\gamma$ at $\gamma(t)$, and $\alpha(x,X)$ the Riemannian norm of the vector $X\in T_xM$. It is easy to see that $d_\alpha$ is a metric on $M$, i.e. it satisfies the axioms:
\begin{enumerate}
\item Positiveness: $d_\alpha(x,y)>0$ if $x\neq y$, $d_\alpha(x,x)=0$,
\item Symmetry: $d_\alpha(x,y)=d_\alpha(y,x)$,
\item Triangle inequality: $d_\alpha(x,y)\leq d_\alpha(x,z)+d_\alpha(z,y)$,
\end{enumerate}
for any $x,y,z\in M$. 

More general structures than Riemannian ones are Finsler structures (see \cite{BCS00}, \cite{S01}, \cite{MHSS01} for definitions).

Similarly with the Riemannian case, one can define the induced metric of a Finsler space $(M,F)$ by 
\begin{equation}\label{Finslerian distance}
d_F:M\times M\to [0,\infty),\quad 
d_F(x,y):=\inf_{\gamma\in \Gamma_{xy}}\int_a^bF(\gamma(t),\dot\gamma(t))dt,
\end{equation}
but in this case, unlike the Riemannian counterpart, $d_F$ lacks the Symmetry condition 3 above. In fact $d_F$ is a special case of {\it quasi-metric space}.

We recall here that a  {\it quasi-metric} $d$ on a set $X$ is a function $d:X\times X\to [0,\infty)$ that  satisfies the axioms:
\begin{enumerate}
\item Positiveness: $d(x,y)>0$ if $x\neq y$, $d(x,x)=0$,
\item Triangle inequality: $d(x,y)\leq d(x,z)+d(z,y)$,
\item Separation axiom: $d(x,y)=d(y,x)=0\quad \Rightarrow \quad x=y,$
\end{enumerate}
for any $x,y,z\in X$. 

\begin{remark}
 Remark that in the definition of quasi-metric spaces, it is commonly used $d_F(x,y)=0 \Rightarrow x=y$, without assuming that both $d_F(x,y)$ and $d_F(y,x)$ are zero (Def 2.1 in \cite{JLP}). 
 This guarantees that the distance is zero only in the diagonal. Our definition here is stronger than the usual one. In general, the distance associated to a Finsler metric is a generalized metric, namely, a quasi-metric such that the forward and backward topology coincide, see Remark 2.2 in \cite{JLP}.
\end{remark}

A quasi-metric that satisfies the symmetry axiom $d(x,y)=d(y,x)$ for all $x,y\in M$ is a metric on $M$. One can easily see that this happens in the case of Riemannian and absolute homogeneous Finsler metrics, i.e. Finsler norms $F$ for which $F(x,y)=F(x,-y)$, for all $(x,y)\in TM\setminus\{0\}$.

Finsler manifolds have a richer geometrical structure than Riemannian ones. Indeed, for a given Finsler manifold $(M,F)$ we can define the {\it reverse} Finsler structure $(M,\bar F)$, where
$\bar F(x,y):=F(x,-y)$. This means that on $M$ we obtain the reverse (or the dual) quasi-metric $d_{\bar F}(x,y):=d_F(y,x)$. Moreover, from the metric point of view we can define 
\begin{itemize}
\item the symmetrization of $d_F$, namely
\begin{equation}\label{symmetrized dist}
\rho(x,y):=\dfrac{d_F(x,y)+d_F(y,x)}{2},
\end{equation}
\item the max-metric
\begin{equation}\label{max metric}
d^*(x,y):=\max\{d_F(x,y),d_F(y,x)\},
\end{equation}
\end{itemize}
for any $x,y\in M$. One can easily see that these are metrics on $M$. We also recall that because of the lack of symmetry of distance function $d_F$ it is customary to make distinction between balls, neighborhoods, etc. by calling them {\it forward} or {\it backward}, respectively. 

One special class of quasi-metric spaces are the so called {\it weighted quasi-metric spaces} $(M,d,w)$, namely $d$ is a quasi-metric on $M$ for each there exists a function $w:M\to [0,\infty)$, called the {\it weight} of $d$, that satisfies
\begin{enumerate}[\ \ 4.]
\item Weightability:  $d(x,y)+w(x)=d(y,x)+w(y),\quad \forall x,y\in M.$
\end{enumerate}
In the case the weight function $w$ is $\R$-valued, $w$ is called {\it generalized weight}. 

\begin{remark}
If $(M,d)$ is a metric space, then it can be regarded as a weighted metric space with a weight function $w=constant$. 
\end{remark}

The weighted quasi-metric spaces were initially introduced in the context of theoretical computer science \cite{M94} and their topological properties are extensively studied (see \cite{KV94} and references herein). It is worth mentioning that the study of denotational semantics of programming languages imposes a topological model defined on a weighted quasi-metric space. From topological point of view, a topological $T_0$-space $X$ admits a weighted quasi-metric provided it has a base $\bigcup_{n\in \N}\mathcal B_n$ for its topology with the property that for each $n\in \N$ there is an $m_n\in \N$ such that each point of $X$ belong to at most $m_n$ elements on $\mathcal B_n$ (\cite{KV94}). We recall that a topological space is a $T_0$-space or Kolmogorov space if for every pair of distinct points of $X$, at least one of them has an open neighborhood not containing the other. This condition is one of the separation axioms in topology and its intuitive meaning is that the points of $X$ are topologically distinguishable. 

More recently, it has been shown (\cite{SY09}) that weighted quasi-metric spaces are essential for sequence comparison in molecular biology and bioinformatics. The comparison of biological sequences (especially proteins) is the fundamental method for the investigations of the origin and function of peptide fragments with evolutionary conserved sequence. The primary method used here is the similarity search: find similar fragments to a given query amino acids sequence and infer the function refereeing to the known functions of search results. The representative tool for similarity search is BLAST that can be accessed from NCBI, DDJ or ENSEMBL web sites. 

It is also known that similarity search of biological sequences can be geometrically formalized by defining a sort of ``distance '' on the free monoid $\Sigma^{*}$ over a non-empty finite set $\Sigma$. Concretely, in the case of peptide fragments comparison, $\Sigma$ is the set of proteinogenic amino acids (arginine, histidine, lysine; aspartic acid, glutamic acid; serine, threonine, asparagine, glutamine; cysteine, glycine, proline; alanine, isoleucine, leucine, methionine, phenylalanine, tryptophan, tyrosine, valine), where each amino acid is denoted by a letter and $\Sigma^{*}$ contains all finite sequences of zero or more elements from $\Sigma$. 

Remarkably, the evolutionary distance induced by local or global alignments of peptide fragments is actually a weighted quasi-metric on $\Sigma^{*}$. Therefore, the study of sequence comparison reduces to the geometry of weighted quasi-metrics, where minimizing similarities between peptide fragments is equivalent to minimizing weighted quasi-distances between elements of the free monoid $\Sigma^{*}$ (\cite{SY09}).

However, despite of extensive investigations of weighted quasi-metric spaces from topological point of view and different applications, a study of these spaces from differential geometry point of view cannot be found in literature. 

In the present paper we will show that the metric structure induced by a Finsler metric with reversible geodesics is actually a weighted quasi-metric. This clarifies the geometrical meaning of weighted quasi-structures. 

Moreover, we obtain several interesting geometrical properties of Finsler metrics with reversible geodesics and weighted quasi-metric spaces.
\bigskip

{\bf Acknowledgements.} The authors are extremely grateful to the referee for several useful suggestions that have considerable improved the paper.
We also thanks to Miguel Angel Javaloyes for pointing out some inaccuracies in a preliminary version. 


\section{Finsler metrics and weighted quasi-metrics}

Recall that a Finsler metric $F$ on a $n$-dimensional differential manifold $M$ is called {\it with reversible geodesics} if and only if for any geodesic $\gamma:[0,1]\to M$ of $F$, the reverse curve 
$\bar\gamma(t):=\gamma(1-t)$ is also a geodesic of $F$.

We point out that even a Finsler space is with reversible geodesics, the Finslerian distance function $d_{F}$ is not symmetric, except for the absolute homogeneous case. 

We have (see \cite{MSS10}, \cite{MSS13}, \cite{SS12} and references herein)
\begin{proposition}\label{rev_geod}
Let $(M,F=F_{0}+\beta)$ be a Finsler space whose fundamental function is obtained by  a Randers change of an absolute homogeneous Finsler metric $F_{0}$ by a one-form $\beta$. Then $(M,F)$ is with reversible geodesics if and only $\beta$ is closed. 
\end{proposition}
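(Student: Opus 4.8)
The plan is to avoid the explicit Randers geodesic spray and to argue directly with the length functional. Write $\cL_G(\gamma):=\int_0^1 G(\gamma(t),\dot\gamma(t))\,dt$ for the length of a curve $\gamma\colon[0,1]\to M$ measured by a Finsler metric $G$, and understand a \emph{geodesic} as a critical point of $\cL_G$ among curves with fixed endpoints (equivalently, a geodesic up to orientation-preserving reparametrization). Three elementary facts will carry the argument. First, $\cL_F(\gamma)=\cL_{F_0}(\gamma)+\int_\gamma\beta$ for every $\gamma$, since $\beta$ is linear in the velocity. Second, because $F_0$ is absolutely homogeneous while $\beta$ is odd in the velocity, the reverse Finsler metric is $\bar F(x,y)=F(x,-y)=F_0(x,y)-\beta(x,y)$, whence $\cL_{\bar F}(\gamma)=\cL_{F_0}(\gamma)-\int_\gamma\beta$ and therefore $\cL_F(\gamma)-\cL_{\bar F}(\gamma)=2\int_\gamma\beta$. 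Third, for a variation $\gamma_s$ of $\gamma$ with fixed endpoints and variation field $V:=\partial_s\gamma_s|_{s=0}$, an integration by parts gives
\begin{equation*}
\frac{d}{ds}\Big|_{s=0}\int_{\gamma_s}\beta \;=\; \pm\int_0^1 (d\beta)\big(\dot\gamma(t),V(t)\big)\,dt ,
\end{equation*}
so that the contribution of $\beta$ to the Euler--Lagrange equations is governed entirely by $d\beta$.

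For the implication ``$\beta$ closed $\Rightarrow$ reversible geodesics'', suppose $d\beta=0$. Then the displayed first variation vanishes for \emph{every} curve $\gamma$ and every variation field $V$ vanishing at its endpoints, so $\cL_F$ and $\cL_{F_0}$ have exactly the same critical points; that is, $F$ and $F_0$ have the same geodesics. Since $F_0$ is absolutely homogeneous, the substitution $u=1-t$ shows $\cL_{F_0}(\bar\gamma)=\cL_{F_0}(\gamma)$ for $\bar\gamma(t):=\gamma(1-t)$, so the reversal map carries $F_0$-geodesics to $F_0$-geodesics, and hence $F$-geodesics to $F$-geodesics. Thus $(M,F)$ is with reversible geodesics.

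For the converse, suppose $(M,F)$ is with reversible geodesics. The same substitution $u=1-t$ gives $\cL_{\bar F}(\gamma)=\cL_F(\bar\gamma)$, so $\gamma$ is a $\bar F$-geodesic if and only if $\bar\gamma$ is an $F$-geodesic; by the reversibility hypothesis, then, every $F$-geodesic is also a $\bar F$-geodesic. Now fix any $F$-geodesic $\gamma$: it is a critical point of $\cL_F$ by definition and of $\cL_{\bar F}$ because it is a $\bar F$-geodesic, hence a critical point of the functional $\sigma\mapsto\int_\sigma\beta$, which equals $\tfrac12(\cL_F-\cL_{\bar F})$. By the first-variation formula this forces $\int_0^1(d\beta)(\dot\gamma(t),V(t))\,dt=0$ for every variation field $V$ vanishing at the endpoints, i.e.\ $(d\beta)(\dot\gamma(t),\cdot)=0$ for all $t$. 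Since through every point of $M$ and in every direction there passes an $F$-geodesic, we conclude $d\beta=0$, i.e.\ $\beta$ is closed.

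The main obstacle, should one insist on the geodesic-spray approach, is to assemble the correction term relating the spray of $F=F_0+\beta$ to that of $F_0$ and to isolate its dependence on $d\beta$ (this is the part the cited references carry out); the variational route above sidesteps that computation entirely. Its only delicate point is to pin down the correct notion of geodesic: adding a closed one-form leaves the geodesics unchanged as point sets but in general alters their constant-speed parametrization, so the statement must be read with geodesics taken up to orientation-preserving reparametrization (equivalently, as critical curves of the length functional), and it is under this reading that the first-variation bookkeeping above is valid. With that convention fixed, the only ingredients beyond elementary variational calculus are that the absolute homogeneity of $F_0$ makes $\cL_{F_0}$ invariant under reversal, and that through every point and in every direction there passes a geodesic.
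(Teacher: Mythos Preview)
The paper does not supply its own proof of this proposition: it is stated with the attribution ``see \cite{MSS10}, \cite{MSS13}, \cite{SS12} and references herein'' and then used as a black box. So there is no in-paper argument to compare against; the references establish the result by computing how the geodesic spray of $F_0+\beta$ differs from that of $F_0$ and showing that the correction term vanishes (for all directions) precisely when $d\beta=0$.

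Your variational argument is correct and genuinely different from that spray computation. The key identities $\cL_F=\cL_{F_0}+\int\beta$, $\bar F=F_0-\beta$, and the first-variation formula $\frac{d}{ds}\big|_{0}\int_{\gamma_s}\beta=\pm\int_0^1(d\beta)(\dot\gamma,V)\,dt$ (with $V$ vanishing at the endpoints) are all standard and correctly stated, and they immediately give both directions: when $d\beta=0$ the functionals $\cL_F$ and $\cL_{F_0}$ share critical points, and reversibility is inherited from the absolute homogeneity of $F_0$; conversely, reversibility forces every $F$-geodesic to be critical for $\cL_F-\cL_{\bar F}=2\int\beta$, and the density of geodesic directions then kills $d\beta$. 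You are also right to flag, and to resolve, the one genuine subtlety: the statement must be read with geodesics taken as unparametrized critical curves of the length functional (equivalently, projective equivalence), since adding a closed one-form alters the constant-$F$-speed parametrization. The paper itself adopts this reading in the paragraph immediately following the proposition (``the $F$-geodesics coincide with the $F_0$-geodesics as set of points, i.e.\ $F$ and $F_0$ are projectively equivalent''), so your convention matches the authors'. Compared with the spray computation in the cited sources, your route is shorter and conceptually cleaner, at the modest cost of having to fix the parametrization convention explicitly.
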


The intuitive meaning of the Randers change 
\begin{equation}\label{Randers change}
F=F_{0}+\beta,\quad d\beta=0
\end{equation}
 is that the $F$-geodesics coincide with the $F_{0}$-geodesics as set of points, i.e. $F$ and $F_{0}$ are projectively equivalent. Remark that this Randers change is a special Randers change with $\beta$ closed. Hereafter, when referring to this formula, we always mean {\it Randers change with $\beta$ closed}.

\begin{remark}
\begin{enumerate}
\item A special case is the case of  Randers metrics $F=\alpha+\beta$, where $\alpha=(a_{ij}(x))$ is a Riemannian metric and $\beta$ closed one-form.  It is known that a Randers metric is positive definite if and only if the Riemannian length of the vector $b_{i}(x)$ is less than one, i.e. $b(x):=\sqrt{a_{ij}(x)b^i(x)b^j(x)}< 1$, for $\forall x\in M$. 
This property also holds in the more general case of an arbitrary Randers change.

\end{enumerate}
\end{remark}

\begin{theorem}\label{theorem 1}
Let $M$ be an $n$-dimensional simply connected smooth manifold. 

A Finsler metric $F$ induces a generalized weighted quasi-distance $d_F$ on $M$ if and only if it is the Randers change of an absolute homogeneous Finsler space $F_0$ by an exact one-form $\beta$.  
\end{theorem}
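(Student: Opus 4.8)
The plan is to prove both directions by analyzing the structure of the symmetrization $\rho$ and the reverse distance $d_{\bar F}$ when $F = F_0 + \beta$ with $\beta$ exact.

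First I would establish the ``if'' direction. Suppose $F = F_0 + \beta$ where $F_0$ is absolute homogeneous and $\beta = df$ for some smooth function $f : M \to \R$. The key computation is that for any curve $\gamma \in \Gamma_{xy}$ we have $\int_\gamma F = \int_\gamma F_0 + \int_\gamma df = \int_\gamma F_0 + f(y) - f(x)$, since $F_0$ is homogeneous of degree one in the velocity and $\int_\gamma df$ is a path-independent integral equal to $f(y)-f(x)$. Taking the infimum over $\Gamma_{xy}$ and noting that the term $f(y)-f(x)$ does not depend on $\gamma$, we obtain $d_F(x,y) = d_{F_0}(x,y) + f(y) - f(x)$. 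Because $F_0$ is absolute homogeneous, $d_{F_0}$ is symmetric, so $d_F(x,y) + f(x) = d_{F_0}(x,y) + f(y) = d_{F_0}(y,x) + f(y) = d_F(y,x) + f(x) + f(y) - f(x) \cdots$; more cleanly, $d_F(x,y) - d_F(y,x) = 2(f(y)-f(x))$, hence $d_F(x,y) + 2f(x) = d_F(y,x) + 2f(y)$, so $w := 2f$ (or $w := f$ up to normalization, adjusting for the factor) is a generalized weight: axiom~4 holds with an $\R$-valued weight. One should also check that $d_F$ is genuinely a quasi-metric (positiveness, triangle inequality, separation), which is standard for Finsler distances, and that simple connectedness is not actually needed here — it is needed only for the converse.

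Next, the ``only if'' direction, which is where simple connectedness enters. Assume $d_F$ is a generalized weighted quasi-metric with weight $w : M \to \R$. From axiom~4, $d_F(x,y) - d_F(y,x) = w(y) - w(x)$ for all $x,y$. The symmetrization $\rho(x,y) = \tfrac12(d_F(x,y)+d_F(y,x))$ is then a symmetric distance, and one computes $d_F(x,y) = \rho(x,y) + \tfrac12(w(y)-w(x))$. The strategy is to recognize $\rho$ as the distance $d_{F_0}$ of an absolute homogeneous Finsler metric $F_0$, and the correction term as the integral of the exact one-form $\beta := \tfrac12\, dw$. Concretely, define $F_0 := F - \tfrac12\,dw$ (viewing $dw$ as a fiberwise-linear function on $TM$); one must verify $F_0$ is a genuine Finsler metric — positivity and strong convexity — and that it is absolute homogeneous, i.e. $F_0(x,-y) = F_0(x,y)$. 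Absolute homogeneity of $F_0$ should follow from the fact that its associated distance is symmetric: the symmetry of $d_{F_0} = \rho$ forces the indicatrices to be symmetric, using that for small geodesic segments the length functional detects $F_0$ pointwise (a localization / first-variation argument showing $F_0(x,y) = \lim_{t\to 0^+} \tfrac1t d_{F_0}(x, \exp_x(ty))$ in both $\pm y$ directions). Then $F = F_0 + \beta$ with $\beta = \tfrac12\, dw$ exact, which is the desired Randers change.

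I expect the main obstacle to be the ``only if'' direction, specifically two linked points: (i) showing that the candidate $F_0 = F - \tfrac12\,dw$ is actually a smooth, strongly convex Finsler metric — a priori $w$ is only known to be a function with $d_F(x,y)-d_F(y,x) = w(y)-w(x)$, so one must first argue $w$ is smooth (e.g. by fixing a basepoint $x_0$ and writing $w(x) = w(x_0) + d_F(x_0,x) - d_F(x,x_0)$ and invoking smoothness of the Finsler distance away from the cut locus, then patching), and then that subtracting the linear form $\tfrac12 dw$ preserves strong convexity — here the hypothesis that $F$ itself is a Finsler metric, together with the bound analogous to $b(x) < 1$ in the Remark, is what keeps $F_0$ positive definite; and (ii) the role of simple connectedness: it guarantees that a closed one-form whose integral is path-independent (which is what the weightability condition encodes) is globally exact, so that $\beta = \tfrac12 dw$ is genuinely $df$-type data on all of $M$ rather than merely locally. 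Absolute homogeneity of $F_0$, once $F_0$ is known to be a Finsler metric with symmetric distance, is then the comparatively routine localization argument sketched above, invoking Proposition~\ref{rev_geod} in the background to see the geometric picture ($F$ and $F_0$ share geodesics as point sets).
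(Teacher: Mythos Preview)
Your proposal is correct and follows essentially the same route as the paper: direct integration of the exact form for the forward direction, and a Busemann--Mayer localization for the converse (the paper applies Busemann--Mayer directly to $d_F$ and uses weightability to compute $F(p,v)-F(p,-v)=dw_p(v)$ in one step, which is your argument reordered). The paper likewise simply assumes $w$ smooth and does not verify strong convexity of $F_0$, so the obstacles you flag are handled there only as side remarks.
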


\begin{proof}
We assume that $F=F_0+\beta$, where $F_0$ is an absolute homogeneous Finsler metric on $M$ and $\beta$ an exact one-form. 

Let $\gamma_{xy} \in \Gamma_{xy}$ be an $F$-geodesic, which is in the same time an $F_{0}$-geodesic, then from \eqref{Finslerian distance} we have
\begin{equation}\label{distances formula}
d_{F}(x,y)=\int_{a}^{b}F_{0}(\gamma_{xy}(t),\dot\gamma_{xy}(t))dt+
\int_{a}^{b}b_{i}(\gamma_{xy}(t))\dot\gamma_{xy}^{i}(t)dt
=d_{F_{0}}(x,y)+\int_{\gamma_{xy}}\beta.
\end{equation}

Let us consider a fixed point $a\in M$ and define the function 
$w_{a}:M\to \R,\quad w_{a}(x):=d_{F}(a,x)-d_{F}(x,a).$
From \eqref{distances formula} it follows 
\begin{equation}\label{w_a}
 w_{a}(x)=\int_{\gamma_{ax}}\beta-\int_{\gamma_{xa}}\beta=2\int_{\gamma_{ax}}\beta=
 -2\int_{\gamma_{xa}}\beta,
\end{equation}
where we have used Stokes' theorem for the one-form $\beta$ on the closed domain $D$ with boundary $\partial D:=\gamma_{ax}\cup \gamma_{xa}$.

One can see that $w_a$ is an anti-derivative of $\beta$. This is well defined if and only if the path integral in right hand side of \eqref{w_a} is path independent, that is $\beta$ must be exact. 

Then $d_{F}$ is a weighted quasi-metric with generalized weight $w_{a}$. Indeed, we have
\begin{equation}\label{rel 1}
d_{F}(x,y)+w_{a}(x)=d_{F_{0}}(x,y)+\int_{\gamma_{xy}}\beta+\int_{\gamma_{ax}}\beta-\int_{\gamma_{xa}}\beta=d_{F_{0}}(x,y)-\int_{\gamma_{xa}}\beta-\int_{\gamma_{ya}}\beta,
\end{equation}
where we have used again Stokes' theorem for the one-form $\beta$ on the closed domain with boundary $\gamma_{ax}\cup \gamma_{xy}\cup \gamma_{ya}$.

Similarly,
\begin{equation}\label{rel 2}
d_{F}(y,x)+w_{a}(y)=d_{F_{0}}(y,x)-\int_{\gamma_{ya}}\beta-\int_{\gamma_{xa}}\beta,
\end{equation}
and hence $d_{F}$ is weighted quasi-metric with generalized weight $w_{a}$.

\smallskip

Conversely, we assume that $(M,F)$ is a Finsler metric whose induced distance function $d_F$ is a weighted quasi-metric on $M$ with weight $w:M\to [0,\infty)$. For simplicity we assume that $w$ is a smooth function.

Let $\gamma:[0,\varepsilon)\to M$ be a {\it short} $C^{1}$ curve that emanates from the point
$p:=\gamma(0)\in M$ with initial velocity $v:=v^{i}\frac{\partial}{\partial x^{i}}\in T_{p}M$. Then by Busemann-Mayer Theorem (see for example \cite{BCS00}) we have
\begin{equation}\label{F(p,v)}
F(p,v)=\lim_{t\to 0^{+}}\frac{d_F(p,\gamma(t))}{t}.
\end{equation}

In a local chart $U$ around the point $p\in M$ the manifold $M$ looks locally as the Euclidean space, therefore we can write
\begin{equation}\label{F(p,-v)}
F(p,-v)=\lim_{t\to 0^{+}}\frac{d_F(\gamma(t),p)}{t},
\end{equation}
and hence, from \eqref{F(p,v)}, \eqref{F(p,-v)} and condition of weightability it results
\begin{equation}
\begin{split}
F(p,v)-F(p,-v)&=\lim_{t\to 0^{+}}\frac{d_F(p,\gamma(t))-d_F(\gamma(t),p)}{t}
=\lim_{t\to 0^{+}}\frac{w(\gamma(t))-w(p)}{t}\\
&=\frac{1}{2}\frac{\partial w}{\partial x^{i}}(p)v^{i}=dw_{p}(v).
\end{split}
\end{equation}

Then, we have
\begin{equation}
F(p,v)=\frac{1}{2}[F(p,v)-F(p,-v)]+\frac{1}{2}[F(p,v)+F(p,-v)]
=F_{0}(p,v)+\beta(p,v),
\end{equation}
where $F_0=\frac{1}{2}[F(p,v)+F(p,-v)]$, and
$\beta(p,v)=\frac{1}{2}dw_{p}(v)$. 

The geodesic reversibility is now obvious from Proposition \ref{rev_geod}.
$\qedd$
\end{proof}

\begin{remark}
Moreover, if for the arbitrary chosen point $a\in M$, there exists a constant $l_{a}$ such that
$l_{a}\leq d_{F}(a,x)-d_{F}(x,a),\quad \forall x\in M,$
then by putting $\widetilde w_{a}(x):=w_{a}(x)-l_{a}$ it follows that $(M,d_{F},\widetilde w_{a})$ is a weighted quasi-metric space. Obviously, when for example $M$ is compact, such an $l_{a}$ always exists (compare with the reversibility function used in \cite{R2004}).
\end{remark}

For later use we recall (\cite{V95}) the following lemma.

\begin{lemma}\label{quasi-metric prop}
Let $(M,d)$ be any quasi-metric space. Then $d$ is weightable if and only if there exists $w:M\to [0,\infty)$ such that 
\begin{equation}\label{formula d}
d(x,y)=\rho(x,y)+\frac{1}{2}[w(y)-w(x)], \quad \forall x,y\in M,
\end{equation}
where $\rho$ is the symmetrized distance of $d$. Moreover, we have
\begin{equation}\label{w bound}
\frac{1}{2}|w(x)-w(y)|\leq \rho(x,y), \quad \forall x,y\in M.
\end{equation}
\end{lemma}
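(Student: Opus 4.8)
The plan is to argue directly from the definitions, using the observation that $2\rho(x,y)=d(x,y)+d(y,x)$ decomposes $d$ into its symmetric part $\rho$ and its antisymmetric part, while the weightability axiom is precisely a constraint on that antisymmetric part.

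First I would establish the forward implication. If $d$ is weightable with weight $w:M\to[0,\infty)$, then the axiom $d(x,y)+w(x)=d(y,x)+w(y)$ rewrites as $d(x,y)-d(y,x)=w(y)-w(x)$. Adding this to the tautology $d(x,y)+d(y,x)=2\rho(x,y)$ and dividing by $2$ gives $d(x,y)=\rho(x,y)+\frac12\bigl[w(y)-w(x)\bigr]$, which is \eqref{formula d}.

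For the converse, suppose $w:M\to[0,\infty)$ satisfies \eqref{formula d}. Adding $w(x)$ to both sides yields $d(x,y)+w(x)=\rho(x,y)+\frac12\bigl[w(x)+w(y)\bigr]$. Since $\rho$ is symmetric, the right-hand side is invariant under interchanging $x$ and $y$; applying \eqref{formula d} with $x$ and $y$ swapped shows $d(y,x)+w(y)$ equals the same expression. Hence $d(x,y)+w(x)=d(y,x)+w(y)$, i.e.\ $d$ is weightable. Finally, \eqref{w bound} is immediate: since $d(x,y)\ge 0$, \eqref{formula d} forces $\rho(x,y)+\frac12\bigl[w(y)-w(x)\bigr]\ge 0$, that is $\frac12\bigl[w(x)-w(y)\bigr]\le\rho(x,y)$; swapping $x$ and $y$ and using $\rho(y,x)=\rho(x,y)$ gives $\frac12\bigl[w(y)-w(x)\bigr]\le\rho(x,y)$, and the two together yield $\frac12|w(x)-w(y)|\le\rho(x,y)$.

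No step here presents a genuine obstacle; the argument is a short algebraic manipulation. The only background fact used is that the symmetrization $\rho$ of a quasi-metric is actually a metric — in particular symmetric, and nonnegative — which follows by averaging the quasi-metric axioms for $d$ and for its reverse $(x,y)\mapsto d(y,x)$, and is standard (cf. the discussion of $\rho$ in Section~1).
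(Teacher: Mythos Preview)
Your argument is correct and is precisely the direct verification the paper has in mind: the paper simply states that the proof is trivial from the definition of a weighted quasi-metric, and what you wrote is exactly that trivial computation spelled out in full.
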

The proof is trivial from the definition of a weighted quasi-metric.

\begin{remark}
If $(M,F)$ is a Finsler space given by the Randers change \eqref{Randers change}, then the induced quasi-metric $d_F$ and the symmetrized metric $\rho$ induce the same topology on $M$. This follows immediately from \cite{KV94}, Lemma 4. 
\end{remark}

\begin{remark}
From Lemma \ref{quasi-metric prop} it can be seen that 
the assumption of $w$ to be smooth is not essential. Indeed, from Lemma \ref{quasi-metric prop} it can be seen that if $d_F$ is a weighted quasi-metric, the function $w$ is 1-locally Lipschitz, that is differentiable almost everywhere on $M$. Therefore, the one-form $\beta$ exists almost everywhere on $M$.  
\end{remark}

\begin{remark}
See \cite{M12} for a very interesting discussion on the completeness of a Randers change by means of an exact one-form $\beta$. 
\end{remark}

We discuss an interesting geometric property concerning the geodesic triangles.

\begin{proposition}\label{Prop 1.6}
Let $(M,F)$ be a Finsler metric given by the Randers change  \eqref{Randers change}. Then the perimeter length of any geodesic triangle on $M$ does not depend on the orientation, that is
\begin{equation}\label{perimeter}
d_F(x,y)+d_F(y,z)+d_F(z,x)=d_F(x,z)+d_F(z,y)+d_F(y,x),\quad \forall x,y,z\in M.
\end{equation}
\end{proposition}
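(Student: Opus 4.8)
The plan is to reduce the identity to the fact — already used in the derivation of \eqref{w_a} — that the integral of the closed one-form $\beta$ around a closed geodesic triangle vanishes. More precisely, I would show that each side of \eqref{perimeter} equals $d_{F_0}(x,y)+d_{F_0}(y,z)+d_{F_0}(z,x)$, so that the whole content of the proposition is the cancellation of the ``$\beta$-contribution'' to $d_F$ around such a triangle. The basic input is \eqref{distances formula}: if $\gamma_{pq}$ is a minimizing $F$-geodesic from $p$ to $q$ (hence, by projective equivalence, also a minimizing $F_0$-geodesic), then $d_F(p,q)=d_{F_0}(p,q)+\int_{\gamma_{pq}}\beta$. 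Since $F_0$ is absolute homogeneous, $d_{F_0}$ is symmetric and the $F_0$-length of a curve is insensitive to its orientation.

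First I would record, exactly as in the computation leading to \eqref{w_a}, the antisymmetric identity
\begin{equation*}
d_F(p,q)-d_F(q,p)=\int_{\gamma_{pq}}\beta-\int_{\gamma_{qp}}\beta=2\int_{\gamma_{pq}}\beta ,
\end{equation*}
where the first equality uses \eqref{distances formula} together with the symmetry of $d_{F_0}$, and the second uses Stokes' theorem for $\beta$ on the domain bounded by the closed curve $\gamma_{pq}\cup\gamma_{qp}$, so that $\int_{\gamma_{qp}}\beta=-\int_{\gamma_{pq}}\beta$. In particular, the symmetrized distance is $\rho(p,q)=\tfrac12\big(d_F(p,q)+d_F(q,p)\big)=d_{F_0}(p,q)$.

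Applying the previous identity to the three pairs $(x,y),(y,z),(z,x)$, the left-hand side of \eqref{perimeter} minus the right-hand side is
\begin{equation*}
\big(d_F(x,y)-d_F(y,x)\big)+\big(d_F(y,z)-d_F(z,y)\big)+\big(d_F(z,x)-d_F(x,z)\big)=2\left(\int_{\gamma_{xy}}\beta+\int_{\gamma_{yz}}\beta+\int_{\gamma_{zx}}\beta\right),
\end{equation*}
that is, twice the integral of $\beta$ over the closed geodesic triangle $\gamma_{xy}\cup\gamma_{yz}\cup\gamma_{zx}$. Since this loop bounds a $2$-chain $D$ in $M$ (as in the simply connected setting of Theorem \ref{theorem 1}, or whenever the triangle is null-homotopic), Stokes' theorem and $d\beta=0$ give $\int_{\partial D}\beta=\int_D d\beta=0$, so the two sides of \eqref{perimeter} agree; by symmetry of $d_{F_0}$ their common value is $d_{F_0}(x,y)+d_{F_0}(y,z)+d_{F_0}(z,x)=\rho(x,y)+\rho(y,z)+\rho(z,x)$.

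The only point to watch is that these applications of Stokes' theorem — to $\gamma_{pq}\cup\gamma_{qp}$ and to the geodesic triangle — require the loops in question to bound $2$-chains, which is automatic on a simply connected $M$, exactly the hypothesis under which \eqref{w_a} was obtained; no difficulty arises beyond what was already used in the proof of Theorem \ref{theorem 1}. Equivalently, and perhaps more transparently, on a simply connected $M$ the closed form $\beta$ is exact, say $\beta=df$, so \eqref{distances formula} reads $d_F(x,y)=d_{F_0}(x,y)+f(y)-f(x)$; substituting this into both sides of \eqref{perimeter}, the potential values telescope to $0$ along the cycle $x\to y\to z\to x$ while the symmetric terms $d_{F_0}$ coincide, which proves the identity immediately. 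The same conclusion follows from Lemma \ref{quasi-metric prop}, writing $d_F(x,y)=\rho(x,y)+\tfrac12[w(y)-w(x)]$ with $\rho=d_{F_0}$ and letting the weight terms telescope around the triangle.
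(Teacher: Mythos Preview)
Your proposal is correct, and your final sentence is precisely the paper's proof: invoke Theorem~\ref{theorem 1} to obtain weightability, apply formula~\eqref{formula d} from Lemma~\ref{quasi-metric prop}, and let the weight terms telescope around the cycle $x\to y\to z\to x$. Your primary argument via line integrals of $\beta$ and Stokes' theorem on the geodesic triangle is just a more explicit, geometric unpacking of the same cancellation, and you correctly flag that the simply-connected hypothesis (implicit here through the appeal to Theorem~\ref{theorem 1}) is what guarantees the loop integral vanishes.
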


\bigskip

\bigskip

\bigskip

\setlength{\unitlength}{1cm} 
\begin{center}
\begin{picture}(5, 3)

\put(3,1){\circle*{0.08}}
\put(2.8,0.7){$x$}
\qbezier(3,1)(3,3)(4,4)
\put(4,4){\circle*{0.08}}
\qbezier(3,1)(4,3)(4.94,3.4)
\qbezier(4,4)(4.9,4)(4.94,3.4)
\put(4.94,3.4){\circle*{0.08}}
\put(3.7,4){$y$}
\put(3.35,3){\vector(-1,-2){0.04}}
\put(4.35,3){\vector(1,1){0.04}}
\put(4.5,3.94){\vector(-3,1){0.04}}
\put(5,3.2){$z$}

\put(0,1){\circle*{0.08}}
\put(-0.2,0.7){$x$}
\qbezier(0,1)(0,3)(1,4)
\put(1,4){\circle*{0.08}}
\qbezier(0,1)(1,3)(1.94,3.4)
\qbezier(1,4)(1.9,4)(1.94,3.4)
\put(1.94,3.4){\circle*{0.08}}
\put(0.7,4){$y$}
\put(0.35,3){\vector(1,2){0.04}}
\put(1.35,3){\vector(-1,-1){0.04}}
\put(1.5,3.94){\vector(3,-1){0.04}}
\put(2,3.2){$z$}

\end{picture}
\end{center}
{\bf Figure 2.} The perimeter of the triangle $\Delta xyz$ is independent of the orientation.

\bigskip

In other words, even though the distance between two points $x$ and $y$ depends on the orientation of a minimizing geodesic joining points $x$ and $y$, i.e. $d_F(x,y)\neq d_F(y,x)$, the sum of distances between three points $x$, $y$, $z$ on $M$ do not depend on the direction we trace out the perimeter of the geodesic triangle $\Delta xyz$. We point out that weighted quasi-metric spaces can be characterized by this property without the explicit use of the weight function. Indeed, a  quasi-metric $d$ is weightable if and only if relation \eqref{perimeter} holds.

\begin{proof}

The proof is almost trivial. Since $F$ it is the Randers change  \eqref{Randers change}, then 
 from Theorem \ref{theorem 1} it follows that the quasi-metric is weightable and therefore  \eqref{formula d} holds good. By using this formula an elementary computation proves \eqref{perimeter}.
$\qedd$
\end{proof}

Moreover, we have

\begin{proposition}
Let $(M,F)$ be a Finsler space that satisfies \eqref{perimeter}. Then $F$ can be written as the  Randers change of an absolute homogeneous Finsler metric $F_0$ by an exact one-form $\beta$.
\end{proposition}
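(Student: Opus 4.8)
The plan is to derive from the perimeter condition \eqref{perimeter} that $d_F$ is weightable, then invoke Theorem \ref{theorem 1} to conclude the desired decomposition. The first step is to recall the claim made in the remark preceding this proposition: a quasi-metric $d$ is weightable if and only if relation \eqref{perimeter} holds. So the core of the proof is to establish this characterization, i.e. to construct a generalized weight function $w$ out of the hypothesis that the perimeter of every (geodesic) triangle is orientation-independent.

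First I would fix a base point $a\in M$ and define, exactly as in the proof of Theorem \ref{theorem 1}, the candidate weight
\begin{equation}
w(x):=d_F(a,x)-d_F(x,a),\qquad x\in M.
\end{equation}
Then I would verify the weightability axiom $d_F(x,y)+w(x)=d_F(y,x)+w(y)$ for all $x,y\in M$. Substituting the definition of $w$, this amounts to
\begin{equation}
d_F(x,y)+d_F(a,x)-d_F(x,a)=d_F(y,x)+d_F(a,y)-d_F(y,a),
\end{equation}
which after rearranging is precisely the perimeter identity \eqref{perimeter} applied to the three points $a$, $x$, $y$ (one side gives $d_F(x,y)+d_F(a,x)+d_F(y,a)$, the other $d_F(y,x)+d_F(a,y)+d_F(x,a)$). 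Hence \eqref{perimeter} forces $d_F$ to be weightable with generalized weight $w$; conversely weightability trivially implies \eqref{perimeter} by a one-line computation using Lemma \ref{quasi-metric prop}. Once weightability is in hand, Theorem \ref{theorem 1} (the converse direction) immediately yields that $F=F_0+\beta$ with $F_0$ absolute homogeneous and $\beta$ exact, completing the argument.

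The one subtlety — and the only place that needs care — is that \eqref{perimeter} in the statement of Proposition \ref{Prop 1.6} is phrased for \emph{geodesic} triangles, whereas the weightability axiom is a statement about arbitrary pairs of points. This is not actually a gap: the distances $d_F(x,y)$, $d_F(y,z)$, $d_F(z,x)$ in \eqref{perimeter} are just the quasi-metric values between the vertices, and a geodesic triangle is determined by any three points, so \eqref{perimeter} as used here is genuinely a statement holding for \emph{all} triples $x,y,z\in M$. I would make this explicit at the start of the proof so that the application to the triple $a,x,y$ is unambiguous. A secondary point worth a sentence: $w$ as defined is $\R$-valued, so strictly one obtains a \emph{generalized} weighted quasi-metric, which is exactly the hypothesis format Theorem \ref{theorem 1} requires; if one further wants $w\ge 0$ one shifts by an infimum as in the remark following the proof of Theorem \ref{theorem 1}, but this is not needed for the conclusion. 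I do not anticipate any real obstacle here — the proposition is essentially the converse bookkeeping to Proposition \ref{Prop 1.6}, and the main theorem does all the analytic work.
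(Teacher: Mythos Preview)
Your proposal is correct and follows exactly the same route as the paper: show that \eqref{perimeter} forces $d_F$ to be weightable, then invoke Theorem \ref{theorem 1}. The paper's proof merely cites \cite{V99} for the first step, whereas you spell it out explicitly with the base-point construction $w(x)=d_F(a,x)-d_F(x,a)$; this is a harmless (indeed welcome) elaboration, not a different approach.
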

\begin{proof}

It is easy to see that if a quasi-metric satisfies  \eqref{perimeter} then it is weightable (see \cite{V99}). Then conclusion follows from Theorem \ref{theorem 1}.
$\qedd$
\end{proof}

\begin{remark}
It should be clear that not any quasi-metric space is weightable. In fact, it can be shown that the class of weightable quasi-metric spaces are exactly those quasi-metric spaces that satisfy relation \eqref{perimeter} (see \cite{V99}).
\end{remark}

\section{Isometric embeddings of Finsler spaces}

If $(X,q,w)$ and $(Y,p,u)$ are two weighted quasi-metric spaces, the mapping $\varphi:X\to Y$ with the properties
\begin{eqnarray}\label{metric ineq}
& & p(\varphi(x),\varphi(y))\leq q(x,y),\quad \forall x,y\in X\\ 
& & u(\varphi(x))\leq w(x), \quad \forall x\in X \label{weight ineq}
\end{eqnarray}
is called a {\it morphism} of weighted quasi-metric spaces. 

In the case we have equality in relation \eqref{metric ineq}, then the morphism $\varphi$ is called an {\it isometric morphism}. In this case $w$ and $u \circ \varphi$ differ by a constant only. 

Moreover, an {\it isomorphism} of the weighted quasi-metric spaces  $(X,q,w)$ and $(Y,p,u)$ is a bijective function  $\varphi:X\to Y$ that preserves both the quasi-metric and the weight function.

Finally, an {\it embedding} of  $(X,q,w)$ into $(G,Q,W)$ is an isomorphism of  $(X,q,w)$ onto a subspace of  $(G,Q,W)$. Here, a {\it subspace $(Y,p,u)$ of a weighted quasi-metric 
space} $(G,Q,W)$ is a subset $Y\subset G$, the function $p$ and $u$ are the restriction of $Q$ and $W$ to $Y\times Y$ and $Y$, respectively. 

\begin{example}[The product of a metric space with a half ray]\label{space times ray}
Consider a metric space $(S,d)$ and the half ray $I:=[0,\infty)$. Then the product space 
$
G:=S\times I
$
inherits a natural structure of (generalized) weighted quasi-metric space $(G,Q,W)$, where
\begin{equation}\label{Q,W for space times ray}
\begin{split}
& Q:G\times G\to [0,\infty), \quad Q(u,v):=d(x,y)+\eta-\xi,\\
& W:G\to [0,\infty),\quad W(u):=2\xi,\quad \forall u=(x,\xi),v=(y,\eta)\in S\times I.
\end{split}
\end{equation}
\end{example}

\begin{remark}
The generalized weighted quasi-metric space $(S\times I,Q,W)$ constructed in Example \ref{space times ray} is sometimes called {\it the bundle over $(S,d)$} (see \cite{V99}).
\end{remark}


\begin{example}[The Graph of a function]\label{Graph of a function}
We consider the case of the graph of a non-negative valued function $f:S\to [0,\infty)$ defined on a metric space $(S,d)$. 

Indeed, if we denote the graph of $f$ by 
$G_f:=\{(x,f(x)):x\in S\}$
then $(G_f,Q,W)$ is a naturally induced  weighted quasi-metric space structure defined by
\begin{equation}
\begin{split}
& Q:G_f\times G_f\to [0,\infty), \quad Q(u,v):=d(x,y)+f(y)-f(x),\\
& W:G_f\to [0,\infty),\quad W(u):=2f(x), \quad \forall u=(x,f(x)),v=(y,f(y))\in G_f.
\end{split}
\end{equation}
\end{example}

Based on these, one has
\begin{theorem}[\cite{V99}]\label{embedding theorem}
Every  weighted quasi-metric space $(X,q,w)$ is embeddable in a bundle over a suitable metric space $(S,d)$. 
\end{theorem}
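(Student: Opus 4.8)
The plan is to take $S:=X$ itself, equipped with the symmetrized metric $\rho$ of $q$, and to embed $X$ into the bundle over $(X,\rho)$ via the map that records a point together with half of its weight. First I would check that $\rho(x,y):=\tfrac12\big(q(x,y)+q(y,x)\big)$ is indeed a metric on $X$: symmetry is immediate; the triangle inequality follows by adding the triangle inequalities for $q$ traversed in the two orders $x\to z\to y$ and $y\to z\to x$; and positivity holds because if $x\neq y$ then $q(x,y)>0$ and $q(y,x)>0$ by the positiveness axiom, so $\rho(x,y)>0$, while $\rho(x,x)=0$ since $q(x,x)=0$.

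Next, invoking Lemma~\ref{quasi-metric prop}, the weightability of $q$ yields the identity $q(x,y)=\rho(x,y)+\tfrac12\big(w(y)-w(x)\big)$ for all $x,y\in X$. Comparing this with the formula \eqref{Q,W for space times ray} for the bundle $\big(X\times[0,\infty),Q,W\big)$ over $(X,\rho)$ dictates the map
\[
\varphi:X\to X\times[0,\infty),\qquad \varphi(x):=\Big(x,\tfrac12 w(x)\Big),
\]
which is well defined because $w$ is $[0,\infty)$-valued. I would then verify directly that, writing $u=\varphi(x)=(x,\tfrac12 w(x))$ and $v=\varphi(y)=(y,\tfrac12 w(y))$,
\[
Q(\varphi(x),\varphi(y))=\rho(x,y)+\tfrac12 w(y)-\tfrac12 w(x)=q(x,y),\qquad W(\varphi(x))=2\cdot\tfrac12 w(x)=w(x),
\]
so that $\varphi$ preserves the quasi-metric and the weight function exactly.

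Finally, $\varphi$ is injective since its first coordinate already recovers $x$, hence it is a bijection onto its image $Y:=\varphi(X)\subset X\times[0,\infty)$; endowing $Y$ with the restrictions $p:=Q|_{Y\times Y}$ and $u:=W|_{Y}$ makes $(Y,p,u)$ a subspace of the bundle in the sense defined above, and the two displayed identities say exactly that $\varphi:(X,q,w)\to(Y,p,u)$ is an isomorphism of weighted quasi-metric spaces. Thus $\varphi$ is an embedding of $(X,q,w)$ into the bundle over $(X,\rho)$, which proves the theorem. I expect the only real content to be the correct choice of $S=X$ with the symmetrized metric and of the embedding map; once Lemma~\ref{quasi-metric prop} is available, every verification is a one-line computation, so there is no serious obstacle — the one mild point to keep an eye on is the normalization factor $\tfrac12$ in the second coordinate, which is precisely what makes $W\circ\varphi$ equal $w$ on the nose rather than merely up to an additive constant.
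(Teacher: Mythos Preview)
Your proof is correct and follows exactly the approach sketched in the paper: take $(S,d)=(X,\rho)$ with $\rho$ the symmetrization of $q$, define $\varphi(x)=(x,\tfrac12 w(x))$, and verify via Lemma~\ref{quasi-metric prop} that $\varphi$ is an isomorphism onto its image in the bundle $(X\times[0,\infty),Q,W)$. The only difference is that you have spelled out the verifications (that $\rho$ is a metric, the two displayed identities, injectivity) that the paper leaves implicit.
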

The idea of the proof is simple. 
Following Example \ref{space times ray}, given the weighted quasi-metric space $(X,q,w)$ one constructs a naturally associated product of a metric space $(S,d)$ and a half line.

The obvious choice for $(S,d)$ is the symmetrization of  the quasi-metric space 
$(X,q)$.
Therefore one has the natural weighted quasi-metric space $(G,Q,W)$, where 
 $G:=X\times [0,\infty)$, $Q$ and $W$ are defined in \eqref{Q,W for space times ray}.

One defines now the function 
$\varphi:X\to G, \ \varphi(x):=(x,\frac{1}{2}w(x)),$
and show that this is indeed an embedding. 

A fundamental result is that any weighted quasi-metric space can be constructed starting from a metric space $(S,d)$  and a 1-Lipschitz function $f:S\to [0,\infty)$ defined on it, i.e.
\begin{equation}\label{Lipschitz condition}
|f(x)-f(y)|\leq d(x,y), \quad \forall x,y\in S.
\end{equation}

\begin{theorem}[\cite{V99}]\label{induced constr}
\begin{enumerate}
\item Let $(S,d)$ be a metric space and $f:S\to [0,\infty)$ a 1-Lipschitz function. Then the graph of $f$ is a weighted quasi-metric space $(G_f,Q,W)$.
\item Conversely, every weighted  quasi-metric space $(X,q,w)$ can be constructed in this way.
\end{enumerate}
\end{theorem}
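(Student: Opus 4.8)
The plan is to treat the two parts separately; part (2) will reduce almost immediately to Lemma~\ref{quasi-metric prop}, while part (1) is a direct verification of the axioms.

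For part (1), since $f$ is an honest function on $S$, the assignment $x\mapsto(x,f(x))$ identifies $G_f$ with $S$ as sets, so $Q$ and $W$ are well defined, and I would simply check the four axioms. We have $Q(u,u)=0$ trivially, and for distinct $u=(x,f(x))$, $v=(y,f(y))$ (so $x\neq y$) the $1$-Lipschitz bound \eqref{Lipschitz condition} gives $Q(u,v)=d(x,y)+f(y)-f(x)\ge d(x,y)-|f(x)-f(y)|\ge 0$; the separation axiom follows since $Q(u,v)=Q(v,u)=0$ forces, upon adding, $2d(x,y)=0$ and hence $x=y$. The triangle inequality for $Q$ is obtained by letting the $f$-terms telescope, which reduces it to the triangle inequality for $d$. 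Finally, weightability is the identity $Q(u,v)+W(u)=d(x,y)+f(x)+f(y)=Q(v,u)+W(v)$, which uses only the symmetry of $d$. The one mild subtlety is that $Q$ satisfies the paper's \emph{strict} positiveness only when $f$ is strictly $1$-Lipschitz; otherwise one is in the generalized-metric setting (a quasi-metric with coinciding forward and backward topologies), which is all that part (2) will require.

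For part (2), given a weighted quasi-metric space $(X,q,w)$ I would take $(S,d):=(X,\rho)$, the symmetrization of $q$, and $f:=\tfrac{1}{2}\,w$. Nonnegativity of $f$ is clear, and the $1$-Lipschitz estimate $|f(x)-f(y)|\le\rho(x,y)$ is precisely \eqref{w bound} of Lemma~\ref{quasi-metric prop}. Part (1) then makes $(G_f,Q,W)$ a weighted quasi-metric space, and I claim the bijection $\iota\colon X\to G_f$, $\iota(x):=(x,f(x))$, is an isomorphism: indeed $Q(\iota(x),\iota(y))=\rho(x,y)+\tfrac{1}{2}\bigl(w(y)-w(x)\bigr)=q(x,y)$ by formula \eqref{formula d}, while $W(\iota(x))=2f(x)=w(x)$, so $\iota$ preserves both the quasi-metric and the weight.

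I do not anticipate a genuine obstacle: both directions are bookkeeping once Lemma~\ref{quasi-metric prop} is in hand, because \eqref{formula d} is exactly the formula that turns a weight into a ``height function'' over the symmetrized space and \eqref{w bound} is exactly the matching Lipschitz estimate. The only points deserving a line of care are the positiveness/separation discussion in part (1), and---if one insists on a genuinely $[0,\infty)$-valued weight rather than an $\R$-valued generalized one---the harmless remark that replacing $w$ by $w$ plus a constant (possible whenever $w$ is bounded below) alters neither $q$ through \eqref{formula d} nor the graph up to isomorphism.
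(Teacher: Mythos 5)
Your proof is correct and follows essentially the same route as the paper: part (1) is the direct verification behind Example \ref{Graph of a function} (with the Lipschitz bound giving $Q\geq 0$), and part (2) takes $(S,d)=(X,\rho)$ and $f=\tfrac{1}{2}w$, using \eqref{w bound} for the Lipschitz estimate and \eqref{formula d} to identify $(X,q,w)$ with $(G_f,Q,W)$, exactly as in the paper's appeal to Theorem \ref{embedding theorem}. Your remark that strict positiveness of $Q$ can fail when the Lipschitz bound is attained is a legitimate refinement that the paper glosses over, but it does not change the substance of the argument.
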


The proof is also quite obvious. Statement 1  is straightforward from Example  \ref{Graph of a function}. We point out that Lipschitz condition \eqref{Lipschitz condition} guaranties that 
$Q(u,v)\geq 0$, i.e.  $(G_f,Q,W)$ is actually a weighted  quasi-metric space.

Statement 2 follows from proof of Theorem \ref{embedding theorem}. Indeed, given a  weighted  quasi-metric space $(X,q,w)$ one can construct 
\begin{itemize}
\item a metric space $(S,d):=(X,\rho)$, where $\rho$ is the symmetrization of $q$,
\item a Lipschitz function $f:S\to [0,\infty)$, $f(x):=\frac{1}{2}w(x)$.
\end{itemize}

One can see that this $f$ always satisfies the Lipschitz condition \eqref{Lipschitz condition} on $(S,d)$ because of \eqref{w bound}.
Moreover, due to  Theorem \ref{embedding theorem} there is an embedding of  $(X,q,w)$ onto  $(G_f,Q,W)$. That is recover the original  weighted  quasi-metric space $(X,q,w)$ from $(G_f,Q,W)$ by identifying  $X$ with a subspace of  $G$ obtained by the obvious projection and restricting $Q$ and $W$ to this $X$.


Next, we recall the differential manifold structure of the graph of a smooth
 function.

Let us consider a $C^{\infty}$ function 
$
f:M\to [0,\infty),\ x\mapsto f(x)
$
and the graph of $f$ denoted by $G_{f}=\{(x,f(x)):x\in M\}\subset M\times \R$. Then it is known that $G_{f}$ is a $C^{\infty}$ submanifold of the product manifold $M\times \R$ that is actually diffeomorphic to $M$. Indeed, the mapping
\begin{equation}
\varphi:M\to G_f,\quad x\mapsto \varphi(x)=(x,f(x))
\end{equation}
with the inverse
\begin{equation}\label{inverse psi}
\psi:G_f\to M,\quad u=(x,f(x))\mapsto \psi(x,f(x))=x
\end{equation}
is a diffeomophism. Remark that $\psi$ is nothing else than the projection onto the first factor. 

Any given weighted quasi-metric space $(M,q,w)$ that satisfies some supplementary metrizability condition induced a Finsler structure $(M,F=F_{0}+df)$ on $M$ and conversely, every given weighted quasi-metric space $(M,q,w)$ can be constructed in this way.

We recall the smooth approximation of Lipschitz functions on a Finsler manifold:
\begin{lemma}[\cite{M12}]\label{smooth approx}
Let $(M,F)$ be a Finsler manifold and $f:M\to \R$ a 1-Lipshitz function, i.e.
\begin{equation}
|f(x)-f(y)|\leq d_{F}(x,y), \quad \forall x,y\in M,
\end{equation}
where $d_{F}$ is the Finslerian induced quasi-distance on $M$. Then, for any small positive $\varepsilon_{1}$, $\varepsilon_{2}$ there exists a smooth function $\widetilde{f}:M\to \R$ such that
\begin{enumerate}
\item $|\widetilde{f}(x)-f(x)|<\varepsilon_{1}$,  $\forall x\in M$,
\item $\widetilde{f}$ is $(1+\varepsilon_{2})$-Lipschitz, i.e. 
$|\widetilde{f}(x)-\widetilde{f}(y)|\leq (1+\varepsilon_{2})d_{F}(x,y), \quad \forall x,y\in M.$
\end{enumerate}
\end{lemma}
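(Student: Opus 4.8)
This is a standard mollification argument; the only delicate point is that the patching procedure must not increase the Lipschitz constant by more than $\varepsilon_2$, which is arranged by letting the mollification scales depend on the chart. Fix a locally finite atlas $\{(U_i,\phi_i)\}_{i\in\N}$ of $M$ with each $\overline{U_i}$ compact and contained in the domain of a larger chart, together with a smooth partition of unity $\{\rho_i\}$ with $\supp\rho_i\subset U_i'$, where $U_i'$ is the $\phi_i$-preimage of a slightly smaller coordinate ball. By joint continuity of $F$ and compactness there are constants $0<c_i\le C_i$ with $c_i|v|\le F(x,v)\le C_i|v|$ for all $x\in\overline{U_i}$, $v\in T_xM$, where $|\cdot|$ is the Euclidean norm in the coordinates $\phi_i$; in particular $f$ is locally Lipschitz in each chart.

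For each $i$ set $f_i:=f\ast\chi_{r_i}$, the convolution (computed in the coordinates $\phi_i$) of $f$ with a standard mollifier at scale $r_i>0$; for $r_i$ small $f_i$ is defined on $\overline{U_i'}$ and $f_i\to f$ uniformly there as $r_i\to 0$. Differentiating under the integral sign, using the Lipschitz bound to dominate the difference quotients, the Busemann--Mayer formula \eqref{F(p,v)}, and the continuity of $F$ on $\overline{U_i}$, one checks that for $r_i$ small enough $df_i(p)(v)\le(1+\varepsilon_2/2)F(p,v)$ for all $p\in\overline{U_i'}$, $v\in T_pM$. Put $P_i:=c_i^{-1}\sup_{\overline{U_i'}}\|d\rho_i\|$, a finite constant (Euclidean operator norm in the coordinates $\phi_i$), so that $|d\rho_i(p)(v)|\le P_iF(p,v)$ on $\overline{U_i'}$. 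Shrink $r_i$ further so that also, on $\overline{U_i'}$, $|f_i-f|<\varepsilon_1$ and $P_i\sup_{\overline{U_i'}}|f_i-f|<\varepsilon_2\,2^{-i-2}$; this is possible since $|f_i-f|\to 0$ uniformly while $P_i$ is fixed. Finally put $\widetilde f:=\sum_{i\in\N}\rho_i f_i$, a smooth function on $M$ (the sum is locally finite).

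Then $|\widetilde f-f|=\bigl|\sum_i\rho_i(f_i-f)\bigr|<\varepsilon_1$ everywhere since $\sum_i\rho_i\equiv 1$, which is item~(1). For item~(2), differentiating and using $\sum_i d\rho_i=0$ gives $d\widetilde f=\sum_i(f_i-f)\,d\rho_i+\sum_i\rho_i\,df_i$. Fix $p\in M$, $v\in T_pM$. In the second sum only indices with $p\in\supp\rho_i\subset U_i'$ occur, on which $df_i(p)(v)\le(1+\varepsilon_2/2)F(p,v)$, so that sum is $\le(1+\varepsilon_2/2)F(p,v)$ because $\sum_i\rho_i(p)=1$. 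In the first sum only indices with $p\in\supp\rho_i\subset\overline{U_i'}$ occur, so by the choice of the $r_i$ its absolute value is at most $\sum_i|f_i(p)-f(p)|P_iF(p,v)<\varepsilon_2\bigl(\sum_i 2^{-i-2}\bigr)F(p,v)\le(\varepsilon_2/2)F(p,v)$. Hence $d\widetilde f(p)(v)\le(1+\varepsilon_2)F(p,v)$ for all $p,v$. Integrating $d\widetilde f$ along an arbitrary curve from $x$ to $y$ and taking the infimum of its $F$-length gives $\widetilde f(y)-\widetilde f(x)\le(1+\varepsilon_2)d_F(x,y)$; exchanging $x$ and $y$ yields $|\widetilde f(x)-\widetilde f(y)|\le(1+\varepsilon_2)d_F(x,y)$, which is item~(2).

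The main obstacle is exactly this Lipschitz bookkeeping: $\sum_i\|d\rho_i\|$ need not be globally bounded on $M$, so no single mollification scale works. What makes the argument go through is that the scales $r_i$ may be chosen chart by chart, and that the error term $\sum_i(f_i-f)\,d\rho_i$ is at each point a locally finite sum whose total is controlled by the geometric series built into the choice of the $r_i$. A secondary technical point is the passage from the infinitesimal estimate $d\widetilde f(p)(v)\le(1+\varepsilon_2)F(p,v)$ to the quasi-distance estimate, which uses Busemann--Mayer and the definition of $d_F$ as an infimum of $F$-lengths.
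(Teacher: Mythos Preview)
The paper does not prove this lemma at all; it is simply quoted from \cite{M12} (with the Riemannian antecedent in \cite{A07}), so there is no ``paper's own proof'' to compare against. Your mollification-plus-partition-of-unity argument is the standard one and is essentially what those references do.

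There is, however, one genuine slip in your last paragraph. From the one-sided pointwise bound $d\widetilde f(p)(v)\le(1+\varepsilon_2)F(p,v)$ you correctly deduce $\widetilde f(y)-\widetilde f(x)\le(1+\varepsilon_2)d_F(x,y)$ by integrating along curves; but ``exchanging $x$ and $y$'' then yields $\widetilde f(x)-\widetilde f(y)\le(1+\varepsilon_2)d_F(y,x)$, not $(1+\varepsilon_2)d_F(x,y)$, and since $d_F$ is \emph{not} symmetric this is strictly weaker than item~(2). The repair is immediate and already implicit in your set-up: the hypothesis $|f(x)-f(y)|\le d_F(x,y)$ is two-sided, so at every point of differentiability of $f$ one has both $df(q)(v)\le F(q,v)$ and $-df(q)(v)\le F(q,v)$ (apply Busemann--Mayer to $f(q+tv)-f(q)\le d_F(q,q+tv)$ \emph{and} to $f(q)-f(q+tv)\le d_F(q,q+tv)$). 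Hence the mollification step in fact gives $|df_i(p)(v)|\le(1+\varepsilon_2/2)F(p,v)$, and your patching estimate then produces the two-sided bound $|d\widetilde f(p)(v)|\le(1+\varepsilon_2)F(p,v)$. Integrating this along any curve from $x$ to $y$ gives $|\widetilde f(y)-\widetilde f(x)|\le(1+\varepsilon_2)d_F(x,y)$ directly, with no need to swap $x$ and $y$.
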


The Riemannian version of this lemma can be found in \cite{A07}.

Then, we have

\begin{theorem}
\begin{enumerate}
\item Let $(M,F=F_{0}+df)$ be a Finsler space, where $f$ is a $C^{\infty}$ non-negative function $f$ on $M$. Then the graph manifold $G_{f}$ inherits a natural structure of weighted quasi-metric space $(G_{f},Q,W)$ that coincides with the weighted quasi-metric space $(M,d_{F}, 2f)$ induced by $F$, up to an isomorphism. 
\item For any given weighted quasi-metric space $(M,q,w)$, whose symmetrized metric is $C^{\infty}$-Riemannian (or absolutely homogeneous Finsler) metrizable, there exist
	\begin{enumerate}
	\item  a smooth approximation function $\widetilde{f}:M\to [0,\infty)$ 
of $w$,
	\item a weighted quasi-metric space $(G_{\widetilde f},\widetilde{Q}, \widetilde{W})$, called the  smooth approximation of  $(G_{f},Q,W)$, that coincides with the weighted quasi-metric space $(M,d_{F}, 2\widetilde{f})$ induced by a (not necessarily positive definite) Randers metric 
	$F=\widetilde\alpha+d\widetilde{f}$ (or Randers change $F=F_{0}+d\widetilde{f}$), up to an isomorphism. 
	\end{enumerate}
\end{enumerate}
\end{theorem}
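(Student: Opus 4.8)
For part (1) the plan is to read off the weight of $d_F$ from Theorem~\ref{theorem 1} and then recognise the graph construction of Example~\ref{Graph of a function}. Since $F=F_0+df$ is a Randers change by the \emph{exact} one-form $\beta=df$, Theorem~\ref{theorem 1} applies, and tracing its proof the generalized weight is $w_a(x)=2\int_{\gamma_{ax}}df=2\big(f(x)-f(a)\big)$; because $f\ge 0$ we may discard the additive constant and take $w=2f\ge 0$. By \eqref{w bound} in Lemma~\ref{quasi-metric prop}, $|f(x)-f(y)|=\tfrac12|w(x)-w(y)|\le\rho(x,y)$, where $\rho$ is the symmetrization of $d_F$; i.e. $f$ is $1$-Lipschitz on the metric space $(S,d):=(M,\rho)$, so Example~\ref{Graph of a function} gives a weighted quasi-metric space $(G_f,Q,W)$ with $Q\big((x,f(x)),(y,f(y))\big)=\rho(x,y)+f(y)-f(x)$ and $W=2f$. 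Finally \eqref{formula d} gives $d_F(x,y)=\rho(x,y)+\tfrac12[w(y)-w(x)]=\rho(x,y)+f(y)-f(x)$, so the diffeomorphism $\varphi:M\to G_f,\ x\mapsto(x,f(x))$ (with inverse the projection $\psi$) satisfies $Q(\varphi(x),\varphi(y))=d_F(x,y)$ and $W(\varphi(x))=2f(x)$; hence $\varphi$ is an isomorphism of weighted quasi-metric spaces between $(M,d_F,2f)$ and $(G_f,Q,W)$. The only routine verification is $Q\ge 0$, which is precisely the $1$-Lipschitz property just noted.

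For part (2) the plan is to produce a smooth function $\widetilde f$ with $\widetilde f\approx\tfrac12 w$ (so that $2\widetilde f$ approximates $w$), build the Randers change $F=F_0+d\widetilde f$, and invoke part (1). By hypothesis the symmetrization $\rho$ of $q$ equals $d_{F_0}$ for some smooth absolutely homogeneous Finsler metric $F_0$ (or $d_{\widetilde\alpha}$ for a smooth Riemannian $\widetilde\alpha$; this case is identical). By \eqref{w bound} the function $h:=\tfrac12 w:M\to[0,\infty)$ is $1$-Lipschitz on $(M,F_0)$. Apply Lemma~\ref{smooth approx} to $h$: for prescribed small $\varepsilon_1,\varepsilon_2>0$ it yields a smooth $(1+\varepsilon_2)$-Lipschitz $g$ with $|g-h|<\varepsilon_1$; set $\widetilde f:=(1+\varepsilon_2)^{-1}g+\varepsilon_1$, a smooth, non-negative, $1$-Lipschitz function with $|\widetilde f-h|$ small. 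Then $\|d\widetilde f\|_{F_0^{*}}\le 1$, so $F:=F_0+d\widetilde f$ is a (not necessarily positive definite) Randers change whose one-form $d\widetilde f$ is closed; by Proposition~\ref{rev_geod} it has reversible geodesics. Now part (1) applies verbatim to $F$: the graph $G_{\widetilde f}$ carries a weighted quasi-metric structure $(G_{\widetilde f},\widetilde Q,\widetilde W)$ with $\widetilde Q\big((x,\widetilde f(x)),(y,\widetilde f(y))\big)=\rho(x,y)+\widetilde f(y)-\widetilde f(x)$, $\widetilde W=2\widetilde f$, isomorphic to $(M,d_F,2\widetilde f)$. Since $d_F(x,y)=\rho(x,y)+\widetilde f(y)-\widetilde f(x)$ by \eqref{distances formula} while $q(x,y)=\rho(x,y)+h(y)-h(x)$ by \eqref{formula d}, and $\widetilde f$ is close to $h$, the space $(G_{\widetilde f},\widetilde Q,\widetilde W)$ is the desired smooth approximation of the (non-smooth) model $(G_f,Q,W)$ of $(M,q,w)$ furnished by Theorem~\ref{induced constr} with $f=\tfrac12 w$.

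I expect the main obstacle to be the control step in part (2): Lemma~\ref{smooth approx} only gives a $(1+\varepsilon_2)$-Lipschitz function, while a genuine Randers change $F_0+d\widetilde f$ — one with $F\ge 0$, so that $d_F$ is a bona fide quasi-metric and the identity $d_F(x,y)=d_{F_0}(x,y)+\widetilde f(y)-\widetilde f(x)$ holds — requires $\|d\widetilde f\|_{F_0^{*}}\le 1$. The rescaling $\widetilde f=(1+\varepsilon_2)^{-1}g+\varepsilon_1$ repairs this at the cost of worsening the bound $|\widetilde f-h|$ by a term of size $\varepsilon_2\|g\|_\infty$; this is harmless when $M$ is compact (where $w$, hence $h$ and $g$, is bounded), but in the non-compact case one should run Lemma~\ref{smooth approx} with $\varepsilon_1,\varepsilon_2$ chosen as suitably small positive functions, as in the partition-of-unity proof of that lemma, and then check that the rescaled $\widetilde f$ remains $C^\infty$, non-negative and $1$-Lipschitz. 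Everything else — the two isomorphisms, the inequality $\widetilde Q\ge 0$, and geodesic reversibility — is a direct application of results already established above.
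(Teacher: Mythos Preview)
Your argument for part (1) is essentially the paper's own: invoke Theorem~\ref{theorem 1} to identify the weight as $2f$, observe that $\rho=d_{F_0}$, and check that the diffeomorphism $\varphi\colon x\mapsto(x,f(x))$ (equivalently its inverse $\psi$) matches $(M,d_F,2f)$ with the graph space $(G_f,Q,W)$ of Example~\ref{Graph of a function}.

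For part (2) your proof is correct but handles the Lipschitz defect differently from the paper. After approximating $h=\tfrac12 w$ by a $(1+\varepsilon_2)$-Lipschitz smooth $g$, you \emph{rescale the function}, setting $\widetilde f=(1+\varepsilon_2)^{-1}g+\varepsilon_1$, so that $\widetilde f$ is $1$-Lipschitz for the \emph{original} $F_0$ and $F=F_0+d\widetilde f$ has $d_F$-symmetrization equal to $\rho$ itself. The paper instead \emph{rescales the metric}: it keeps $\widetilde f$ as the raw smooth approximation of $h$ and defines $\widetilde a_{ij}:=(1+\varepsilon_2)^2 a_{ij}$ (so $\widetilde\alpha=(1+\varepsilon_2)\alpha$), which is precisely why the statement writes $F=\widetilde\alpha+d\widetilde f$ rather than $F=\alpha+d\widetilde f$. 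The gain of the paper's route is uniform control without any boundedness hypothesis: one gets $|\widetilde Q-Q|\le 2\varepsilon_1$ and $|\widetilde W-W|\le 2\varepsilon_1$ directly, whereas your rescaling introduces the extra error $\varepsilon_2\|g\|_\infty$ that you yourself flag in the non-compact case. The gain of your route is that the symmetrized metric is left untouched, so $(G_{\widetilde f},\widetilde Q,\widetilde W)$ sits over the \emph{same} metric space $(M,\rho)$ rather than over a dilated one. Either choice yields the theorem; just be aware that your interpretation of $\widetilde\alpha$ as ``the Riemannian metric metrizing $\rho$'' does not match the paper's intended meaning of $\widetilde\alpha$ as the $(1+\varepsilon_2)$-dilation of that metric.
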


\begin{proof}
1. If we start with the Finsler structure $(M,F=F_{0}+df)$, then this is with reversible geodesics, and therefore from Theorem \ref{theorem 1} it follows that $M$ becomes a weighted quasi-metric space $(M,d_{F}, 2f)$, where
$d_{F}(x,y)=d_{F_{0}}(x,y)+f(y)-f(x),\quad \forall x,y\in M.$

The symmetrized metric $\rho$ of $d_{F}$ coincides with $d_{F_{0}}$ 
and therefore the graph manifold $G_{f}$ becomes a weighted quasi-metric space $(G_{f},Q,W)$, where 
\begin{equation}
 Q(u,v)=\rho(x,y)+f(y)-f(x),\ W(u)=2f(x),\quad \forall u=(x,f(x)),v=(y,f(y))\in G_{f}.
\end{equation}

It can be easily seen that this $(G_{f},Q,W)$ is indeed a weighted quasi-metric space and that 
$\psi:(G_{f},Q,W)\to (M,d_{F}, 2f)$ defined in \eqref{inverse psi} is an isometric embedding.

\bigskip

2. Starting with an arbitrary weighted quasi-metric space $(M,q,w)$ remark that the weight $w:M\to [0,\infty)$ is a 1-Lipschitz function (see \eqref{w bound}) with respect to the symmetrized metric $\rho$. This is not good enough to define a $C^{\infty}$-Finsler metric because $w$ has a measure zero set of points where it fails to be differentiable.

We are going to use the smooth approximation of Lipschitz functions on Riemannian manifolds (\cite{A07}) or Finsler manifolds (see Lemma \ref{smooth approx}). For the sake of simplicity we present only the Riemannian case here. We put $f:=\frac{1}{2}w$ and denote the smooth approximation of $f$ by $\widetilde f$. It follows that $G_{\widetilde f}$ is a smooth manifold that inherits a natural structure of weighted quasi-metric space from $(G_{f},Q,W)$ constructed in Theorem \ref{induced constr}, 1. Namely, we put
\begin{equation}
 \widetilde Q(u,v):=(1+\varepsilon_{2})\rho(x,y)+\widetilde f(y)-\widetilde f(x),\
 \widetilde W(u):=2\widetilde f(x),\ \forall u=(x,\widetilde f(x),  v=(y,\widetilde f(y))\in G_{\widetilde f}.
\end{equation}
Since $\widetilde f$ is $(1+\varepsilon_{2})$-Lipschitz with respect to $\rho$ it follows that $\widetilde Q(u,v)\geq 0$ for any $u,v\in G_{\widetilde f}$. Elementary computations shows that indeed $(G_{\widetilde f},\widetilde{Q}, \widetilde{W})$ is a weighted quasi-metric space that smoothly approximates $(G_{f},Q,W)$ in the sense that 
\begin{equation}
 |\widetilde Q(u,v)-Q(u,v)|\leq 2\varepsilon_{1},\quad
 |\widetilde W(u)-W(u)|\leq 2\varepsilon_{1}, \quad \forall u,v\in G_{\widetilde f}.
\end{equation}

We define now $\widetilde a_{ij}(x):=(1+\varepsilon_{2})^{2}a_{ij}(x)$, where $(M,a)$ is the Riemannian metric corresponding to the metric  space $(M,\rho)$, for $\forall i,j\in\{1,2,\dots,n\}$, and $\forall x\in M$. 

The Randers space $(M,F=\widetilde\alpha+d\widetilde f)$ induces a structure of weighted quasi-metric space $(M,d_{F}, 2\widetilde{f})$ on $M$ which is isometrically embeddable into $(G_{\widetilde f},\widetilde{Q}, \widetilde{W})$ as shown above. Obviously, this Randers space is positive definite if and only if the Riemannian length of the gradient vector $grad\  \widetilde f$ is less than one. 

The proof is identical if $(M,\rho)$ is absolutely homogeneous Finsler metrizable. 
$\qedd$
\end{proof}

\begin{corollary}
For any given weighted quasi-metric space $(M,q,w)$, whose symmetrized metric is $C^{\infty}$-Riemannian (or absolutely homogeneous Finsler) metrizable, and whose weight $w$ is a smooth function, 
the weighted quasi-metric space $(M,d_{F}, 2{f})$ induced by the Randers metric $F=\alpha+d{f}$ 
(or Randers change $F=F_{0}+d{f}$)
coincides with $(M,q,w)$.
\end{corollary}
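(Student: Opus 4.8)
The plan is to observe that this is exactly the construction used in the proof of the preceding theorem, with the smooth‑approximation step deleted because the hypothesis now supplies a weight $w$ that is already smooth. Put $f:=\tfrac12 w$, a smooth non‑negative function on $M$, and let $\rho$ denote the symmetrization of $q$. By Lemma \ref{quasi-metric prop} applied to the weightable quasi‑metric $q$ we have
\[
q(x,y)=\rho(x,y)+\tfrac12[w(y)-w(x)]=\rho(x,y)+f(y)-f(x),\qquad\forall x,y\in M,
\]
and \eqref{w bound} gives $|f(x)-f(y)|\le\rho(x,y)$, i.e.\ $f$ is $1$‑Lipschitz for $\rho$. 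By hypothesis there is a smooth Riemannian metric $\alpha$ (or absolutely homogeneous Finsler metric $F_0$) with $d_\alpha=\rho$ (resp.\ $d_{F_0}=\rho$). Since $w$ is smooth, $df=\tfrac12 dw$ is a genuine smooth exact one‑form, so $F:=\alpha+df$ (resp.\ $F:=F_0+df$) is a Randers change by an exact one‑form; by Proposition \ref{rev_geod} it has reversible geodesics.

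Next I would compute $d_F$ exactly as in \eqref{distances formula}. For any $\gamma\in\Gamma_{xy}$ the $F$‑length is $\int_\gamma\alpha+\int_\gamma df$, and since $df$ is exact the increment $\int_\gamma df=f(y)-f(x)$ depends only on the endpoints; taking the infimum over $\Gamma_{xy}$ therefore yields
\[
d_F(x,y)=d_\alpha(x,y)+f(y)-f(x)=\rho(x,y)+f(y)-f(x)=q(x,y),\qquad\forall x,y\in M
\]
(the $F$‑geodesics and the $\alpha$‑geodesics even coincide as point sets, by projective equivalence). In particular the symmetrization of $d_F$ is $d_\alpha=\rho$, and a one‑line computation using the displayed formula shows $d_F(x,y)+2f(x)=\rho(x,y)+f(x)+f(y)=d_F(y,x)+2f(y)$, so $(M,d_F,2f)$ is a weighted quasi‑metric space. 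Since $2f=w$ identically, the identity map of $M$ is an isomorphism of weighted quasi‑metric spaces from $(M,d_F,2f)$ onto $(M,q,w)$: the two coincide on the nose, not merely up to the additive constant in the weight that appears in the preceding theorem.

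The only point needing care is positivity. The Lipschitz bound only gives $\|df\|_\alpha\le 1$, with possible equality on a negligible set, so $F=\alpha+df$ may fail to be positive definite there --- this is why the statement allows a ``(not necessarily positive definite) Randers metric.'' This is harmless: $\|df\|_\alpha\le 1$ already forces $F\ge 0$, hence the $F$‑length of any piecewise $C^\infty$ curve is its $\alpha$‑length plus the exact increment of $f$, and the infimum computation above is valid verbatim, while the positiveness and separation axioms for $d_F$ follow from those of $\rho$ together with $|f(x)-f(y)|\le\rho(x,y)$. If one insists on a genuine positive definite Finsler metric one must add the hypothesis $\|\mathrm{grad}\,f\|_\alpha<1$ on all of $M$, exactly as remarked in the preceding theorem. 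I expect this positivity bookkeeping to be the only real obstacle; the rest is a direct specialization of results already established.
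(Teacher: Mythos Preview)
Your proposal is correct and is exactly the specialization the paper intends: the Corollary has no separate proof in the paper because it follows from the preceding theorem by taking $\tilde f=f=\tfrac12 w$ and $\varepsilon_1=\varepsilon_2=0$ once $w$ is assumed smooth, which is precisely what you do. Your discussion of the positivity issue (that $\|df\|_\alpha\le 1$ but not necessarily $<1$) is also appropriate and matches the ``not necessarily positive definite'' caveat in the theorem immediately above.
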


\begin{remark}
Obviously not any metric space $(M,d)$ is Riemannian metrizable. Here Riemannian metrizable means that $M$ is a differentiable manifold and that there exists a 
$C^\infty$-Riemannian metric $a=a_{ij}(x)$ on $M$ whose associated distance function $d_{\alpha}$ coincides with $d$. 

General conditions for a metric space to be Riemannian metrizable can be found in \cite{N99}. Similar conditions can be easily established for metric spaces to be absolute homogeneous Finsler metrizable. 

More generally one can study conditions for quasi-metric space to be Finsler metrizable. Metrizability of metric or quasi-metric spaces is a complex subject that we intend to discuss in a forthcoming paper. 
\end{remark}

\bigskip

We discuss now another representation of Finsler spaces. 

We recall that for a metric space the {\it Hausdorff distance} is a distance function between subsets of $M$. Indeed, if $(M,d)$ is a metric space, then the mapping
\begin{equation}
d_H:2^M\times 2^M\to [0,\infty),\quad d_H(A,B)=\max\{\sup_{a\in A}d(a,B),
\sup_{b\in B}d(b,A)\}
\end{equation}
is called the {\it Hausdorff metric}, where $2^M$ is the set of all subsets of $M$.

In the case of $2^M$ the function $d_H$ is only a semi-metric. However, the pair $(\mathcal{P}_0(M,d),d_H)$ is a metric space, where $\mathcal{P}_0(M,d)$ is the set of non-empty closed subsets of $M$ (\cite{BBI01}).

This notion can be easily extended to the more general case of quasi-metric spaces as follows. Let $(M,q)$ be a quasi-metric space.
We define the mappings:
\begin{equation}
\begin{split}
& d_H^f(A,B):=\sup_{a\in A}q(a,B),\quad 
 d_H^b(A,B):=\sup_{b\in B}q(A,b),\\
& d_H(A,B):=\max\{d_H^f(A,B),d_H^b(A,B)\},\quad \forall A,B\in \mathcal{P}_0(M,d).
\end{split}
\end{equation}

It can be seen that $d_H^f$, $d_H^b$, $d_H$ are extended quasi-metrics on  $\mathcal{P}_0(M,q)$ and that they become quasi-metrics when restricted to 
$\mathcal K_0(M,q)$, that is the set of all non-empty compact subsets of $(M,q)$. 

The pair $( \mathcal K_0(M,q),d_H)$ is called the associated 
{\it quasi-Hausdorff metric} of $(M,q)$ (compare with \cite{S01}).

Many of the geometrical properties of Hausdorff distance extend to the case of quasi-Hausdorff distances (a detailed study of these together with the Gromov-Hausdorff distance will be given elsewhere). 

Let ($M,q)$ be a quasi-metric space, and construct the metric space $(X,\delta)$, where
$X:=M\times [0,\infty)$, and
$\delta((x,\xi),(y,\eta))=d^*(x,y)+|\xi-\eta|,$
for $\forall (x,\xi),(y,\eta)\in X$. 
Here $d^*$ is the max-metric \eqref{max metric}.

It can be easily seen that indeed $(X,\delta)$ is a metric space and that for $\forall z\in M$, the set
$E(z):=\{(y,\eta)\in X:d(y,z)\leq \eta\}$
is a non-empty closed subset in  $(X,\delta)$, i.e. $E(z)\in \mathcal{P}_0(X,\delta)$. We write here $\delta$ in order to make explicit the topology where the set are closed. 

We recall (\cite{V95}) that
if $(M,d)$ is a quasi-metric space, then the mapping
$E:(M,d)\to (\mathcal{P}_0(X,\delta),d_H^f), \ z\mapsto E(z)$
is an isometry of $M$ onto a subspace $E(M)$ of $\mathcal{P}_0(X,\delta)$.
Indeed, it can be seen that 
$E$ is injective, and
$d(x,y)=d_H^f(E(x),E(y))$, for $\forall x,y\in M$.

We obtain

\begin{proposition}
Let $(M,F)$ be a Finsler space with associated quasi-metric $d_F$. Then the quasi-metric space $(M,d_F)$ is isometric to a subspace $E(M)$ of $(\mathcal{P}_0(M),d_H^f)$.
\end{proposition}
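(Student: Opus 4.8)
The plan is to read this Proposition as a direct specialization of the isometric realization recalled just above (due to \cite{V95}), applied to the quasi-metric $d:=d_F$. First I would record that $(M,d_F)$ is genuinely a quasi-metric space: the functional in \eqref{Finslerian distance} is non-negative, vanishes on the diagonal, obeys the triangle inequality by concatenation of curves, and satisfies the separation axiom under the paper's (stronger) convention. Hence the general construction applies verbatim, with ambient space $X:=M\times[0,\infty)$ and metric $\delta((x,\xi),(y,\eta)):=d^*(x,y)+|\xi-\eta|$, where $d^*$ is the max-metric \eqref{max metric} of $d_F$.

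Next I would carry out the three auxiliary verifications. (i) $(X,\delta)$ is a metric space: symmetry and the triangle inequality descend coordinatewise from those of $d^*$ and of the absolute value, and $\delta$ vanishes only on the diagonal since $d^*$ is a genuine metric. (ii) For each $z\in M$ the set $E(z)=\{(y,\eta)\in X:\,d_F(y,z)\leq\eta\}$ is non-empty, since $(z,0)\in E(z)$, and is closed in $(X,\delta)$: if $d_F(y_0,z)>\eta_0$, then using that $y\mapsto d_F(y,z)$ is continuous with respect to $d^*$ (hence with respect to $\delta$) and that $\eta$ varies continuously, a sufficiently small $\delta$-ball around $(y_0,\eta_0)$ avoids $E(z)$; thus $E(z)\in\mathcal{P}_0(X,\delta)$. (iii) The map $E$ is injective: if $E(x)=E(y)$, then $(x,0)\in E(y)$ forces $d_F(x,y)\leq 0$, and symmetrically $(y,0)\in E(x)$ forces $d_F(y,x)\leq 0$; both distances vanish, so the separation axiom gives $x=y$.

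Finally I would invoke the recalled statement of \cite{V95}, which supplies the identity $d_F(x,y)=d_H^f(E(x),E(y))$ for all $x,y\in M$; combined with injectivity this says precisely that $E$ is an isometry of $(M,d_F)$ onto the subspace $E(M)$ of $(\mathcal{P}_0(X,\delta),d_H^f)$ — the assertion of the Proposition, with the ambient metric space being the one built from $M$ in the recalled construction.

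I expect no serious obstacle: the substantive content, namely the equality $d_F(x,y)=d_H^f(E(x),E(y))$, is imported from \cite{V95}, and everything else is the routine bookkeeping above. The only mildly delicate point is the closedness of $E(z)$, which rests on the continuity of $d_F$ in its first argument with respect to the symmetrized (equivalently, max-) topology.
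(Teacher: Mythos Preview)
Your proposal is correct and follows essentially the same route as the paper: the Proposition is stated immediately after recalling Vitolo's result \cite{V95}, and the paper's implicit proof is precisely to specialize that result to $d:=d_F$, noting injectivity of $E$ and the identity $d(x,y)=d_H^f(E(x),E(y))$. You supply a bit more detail (the metric-space check for $(X,\delta)$, closedness of $E(z)$, the separation-axiom argument for injectivity), and you correctly identify the ambient space as $X=M\times[0,\infty)$ rather than $M$ as written in the statement, which is a slip in the paper.
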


Let us assume now that $d_F$ is a weighted quasi-metric. In this case we have
$d_H^f(A,B)=d_H^\rho(A,B)+\frac{1}{2}\Bigl(\inf_{b\in B}w-\sup_{a\in A}w
\Bigr),$
where we have used
$d(a,B)=\inf_{b\in B}d(a,b)=\rho(a,B)+\frac{1}{2}\Bigl(\inf_{b\in B}w-w(a)
\Bigr).$
Here $d_H^\rho(A,b)$ is the usual Hausdorff distance of the symmetrized metric $\rho$. 

It can be seen that the forward Hausdorff distance can not exceed the $\rho$-Hausdorff distance.



%
%
%
%
\end{document}